\newtheorem{theorem}{Theorem}[section]
\newtheorem{Corolario}[theorem]{Corollary}
\newtheorem{teo}{Theorem}
\newtheorem{Theorem B}{Theorem B}
\newtheorem{remark}[theorem]{Remark}
\numberwithin{equation}{section}
\newcommand{\be}{\begin{eqnarray}}
\newcommand{\en}{\end{eqnarray}}
\newcommand{\no}{\nonumber}
\newcommand{\R}{\mathbb{R}}
\newcommand{\dive}{\text{div}}
\newcommand{\Addresses}{{
\bigskip
\footnotesize
Instituto Federal Goiano, Campus Trindade,  Brazil \\
\textit{E-mail address:} \texttt{adriano.bezerra@ifgoiano.edu.br}
\vspace{.3cm}\\
Universidade de S\~ao Paulo, S\~ao Carlos, Brazil \\
\textit{E-mail address:} \texttt{manfio@icmc.usp.br} \\
}}
\begin{document}

\author{A. C. Bezerra and  F. Manfio
\footnote{Fernando Manfio is supported by Fapesp, Grant 2016/23746-6.}}

\title{Umbilicity of constant mean curvature hypersurfaces into
space forms}

\date{\today}
\maketitle

\noindent \emph{2010 Mathematics Subject Classification:} 35P15,
53C24, 53C42.
\vspace{2ex}

\noindent \emph{Key words}: Super stability operator, Eigenvalues,
Minimal submanifolds.

\begin{abstract}
In this paper we establish conditions on the length of the traceless part
of the second fundamental form of a complete constant mean curvature
hypersurface immersed in a space of constant sectional curvature in
order to show that it is totally umbilical.
\end{abstract}

\section{Introduction} \label{sec:Introd}

The celebrated result by Bernstein \cite{BE} asserts that the only complete
minimal graphs in $\mathbb{R}^{3}$ are planes. Bernstein's theorem remains
valid for complete minimal graphs in $\mathbb{R}^{n+1}$ provided that
$n\leq7$, as state the works of Fleming \cite{F}, De Giorgi \cite{D}, Almgren
\cite{A} and Simons \cite{S}. However, the restriction on the dimension is
necessary, as shown by a counterexample due Bombieri, De Giorgi and
Giusti \cite{BGG}. The stability of the entire minimal graphs leads us to the
natural question of whether a complete stable minimal hypersurface in
$\mathbb{R}^{n+1}$, with $n\leq7$, is a hyperplane. It was proved
independently by do Carmo and Peng \cite{CP}, Fischer-Colbrie and
Schoen \cite{FCS} that a complete stable minimal surface in
$\mathbb{R}^{3}$ must be a plane. A generalization of this theorem for
higher dimensions was obtained by do Carmo and Peng:

\begin{teo}[\cite{CP}, \cite{FCS}] \label{Theorem A1}
Let $M^{n}$ be a minimal hypersurface in $\mathbb{R}^{n+1}$ . Assume that
$M^{n}$ is stable, complete and that
\be\no
\lim_{R\rightarrow +\infty} \frac{\int_{B_p(R)}|A|^{2}}{R^{2q+2}} = 0, \quad
q<\sqrt{\frac{2}{n}}.
\en
Then $M^{n}$ is a hyperplane.
\end{teo}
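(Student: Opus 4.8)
The plan is to run a stability-plus-Simons argument, testing the stability inequality against a power of $|A|$. Since $M^n$ is minimal in $\R^{n+1}$, the normal Ricci term vanishes and stability reads $\int_M |A|^2 f^2 \le \int_M |\nabla f|^2$ for every $f \in C_0^\infty(M)$. Writing $\phi = |A|$, Simons' identity $\frac12\Delta|A|^2 = |\nabla A|^2 - |A|^4$ together with the refined Kato inequality $|\nabla A|^2 \ge \left(1 + \tfrac{2}{n}\right)|\nabla\phi|^2$ yields the pointwise bound $\phi\,\Delta\phi \ge \tfrac{2}{n}|\nabla\phi|^2 - \phi^4$ on the open set $\{\phi > 0\}$. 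I would first record both facts and then take the test function $f = \phi^{1+q}\eta$, where $\eta$ is a Lipschitz cutoff to be chosen, handling the set $\{\phi = 0\}$ by the usual approximation.

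First I would expand $|\nabla f|^2$ in the stability inequality and, separately, multiply the Simons inequality by $\phi^{2q}\eta^2$ and integrate by parts. Subtracting the two integral inequalities cancels the top-order term $\int \phi^{4+2q}\eta^2$ and leaves
\[
\Big(\tfrac{2}{n} - q^2\Big)\int_M \phi^{2q}|\nabla\phi|^2\eta^2 \;\le\; 2q\int_M \phi^{1+2q}\,\eta\,\nabla\phi\cdot\nabla\eta \;+\; \int_M \phi^{2+2q}|\nabla\eta|^2 .
\]
The hypothesis $q < \sqrt{2/n}$ is precisely what makes the coefficient $\tfrac{2}{n} - q^2$ strictly positive. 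A Cauchy--Schwarz estimate on the cross term, with a small parameter chosen below this coefficient, lets me absorb the gradient term on the left and obtain $\int_M \phi^{2q}|\nabla\phi|^2\eta^2 \le C\int_M \phi^{2+2q}|\nabla\eta|^2$; feeding this back into the expanded stability inequality produces the clean estimate $\int_M \phi^{4+2q}\eta^2 \le C\int_M \phi^{2+2q}|\nabla\eta|^2$.

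The remaining task is to interface this with the growth hypothesis, which controls $\int_{B_p(R)} |A|^2 = \int_{B_p(R)}\phi^2$ rather than $\int \phi^{2+2q}$. Here I would apply H\"older's inequality with exponents $1+q$ and $\tfrac{1+q}{q}$ to interpolate $\phi^{2+2q}$ between $\phi^2$ and $\phi^{4+2q}$, so that with the standard cutoff ($\eta \equiv 1$ on $B_p(R)$, supported in $B_p(2R)$, $|\nabla\eta| \le C/R$) and writing $I(R) = \int_{B_p(R)}\phi^{4+2q}$ one arrives at
\[
I(R) \;\le\; \frac{C}{R^2}\Big(\textstyle\int_{B_p(2R)}|A|^2\Big)^{\frac{1}{1+q}}\, I(2R)^{\frac{q}{1+q}} .
\]
Since $\tfrac{q}{1+q} < 1$, Young's inequality converts this into a doubling estimate $I(R) \le \tfrac12 I(2R) + C\,R^{-(2+2q)}\int_{B_p(2R)}|A|^2$, whose last term tends to $0$ by the growth hypothesis. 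Iterating the doubling inequality then forces $\int_M \phi^{4+2q} < \infty$, and letting $R \to \infty$ once finiteness is known gives $\int_M\phi^{4+2q} = 0$. Hence $|A| \equiv 0$, so $M^n$ is totally geodesic and, being complete, a hyperplane.

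I expect the main obstacle to be the last step: iterating the doubling inequality, since absorbing $\tfrac12 I(2R)$ requires an a priori growth bound on $I$ to guarantee $2^{-k}I(2^kR) \to 0$. I would handle this by extracting a crude polynomial bound on $I$ from the displayed reverse inequality and the growth hypothesis, and then bootstrapping: each application multiplies the growth exponent by $\tfrac{q}{1+q} < 1$, so after finitely many steps $I(R)$ is bounded and in fact vanishes in the limit. Keeping the various absorption constants compatible with $\tfrac{2}{n} - q^2 > 0$ throughout is the delicate bookkeeping on which the argument hinges.
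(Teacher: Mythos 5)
Note first that the paper does not actually prove Theorem \ref{Theorem A1}: it is quoted from \cite{CP} and \cite{FCS}, and its nearest internal analogue is Corollary \ref{c4}, recovered from the machinery of Theorem \ref{th2}. Your stability-plus-Simons computation is the right one and is correct up to and including the clean estimate $\int_M\phi^{4+2q}\eta^2\le C\int_M\phi^{2+2q}|\nabla\eta|^2$: the coefficient $\tfrac2n-q^2$ is exactly what the refined Kato constant $1+\tfrac2n$ and the test power $1+q$ force, and $q<\sqrt{2/n}$ is precisely what keeps it positive.

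The genuine gap is the closing iteration. From $I(R)\le\tfrac12 I(2R)+o(1)$ you need $2^{-k}I(2^kR)\to0$, i.e.\ an a priori sublinear-in-the-doubling-scale growth bound on $I(\rho)=\int_{B_p(\rho)}|A|^{4+2q}$. You propose to extract a crude polynomial bound on $I$ from the reverse H\"older inequality $I(R)\le\delta(R)\,I(2R)^{q/(1+q)}$ and the growth hypothesis, but that inequality bounds $I$ at radius $R$ by $I$ at the \emph{larger} radius $2R$ and so cannot yield any upper growth bound: for instance $I(R)=\exp(\exp R)$ is perfectly consistent with $I(R)\le\delta(R)I(2R)^{q/(1+q)}$ for large $R$. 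The hypothesis controls only $\int_{B_R}|A|^2$, not $\int_{B_R}|A|^{4+2q}$, and no pointwise curvature bound is available for a general complete stable minimal hypersurface to convert one into the other; so the bootstrap as described does not close. The standard fix --- and it is exactly the device the paper uses at the end of the proof of Theorem \ref{th2}, where $f$ is replaced by $f^{q+1}$ in \eqref{3.5} --- is to raise the cutoff to the power $1+q$ before interpolating: substituting $\eta^{1+q}$ into your estimate gives $\int_M\phi^{4+2q}\eta^{2(1+q)}\le C\int_M\phi^{2(1+q)}\eta^{2q}|\nabla\eta|^2$, and a single application of H\"older with exponents $\tfrac{1+q}{q}$ and $1+q$ (splitting $\phi^{2(1+q)}\eta^{2q}=\bigl(\phi^{\frac{2q(q+2)}{q+1}}\eta^{2q}\bigr)\phi^{\frac{2}{q+1}}$) yields
\[
\left(\int_M\phi^{4+2q}\eta^{2(1+q)}\right)^{\frac{1}{1+q}}\le C\left(\int_M\phi^{2}|\nabla\eta|^{2(1+q)}\right)^{\frac{1}{1+q}},
\]
hence $\int_{B_p(R)}|A|^{4+2q}\le C R^{-(2q+2)}\int_{B_p(2R)}|A|^2\to0$ directly, with no iteration and no a priori bound on $I$ required.
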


Here, $B_{p}(R)$ denotes the geodesic ball of radius $R$ centered at
$p \in M^{n}$ and $A$ is the second fundamental form of $M^{n}$.
Some partial answers in order to generalize Theorem \ref{Theorem A1}
can be found in \cite{INS}, \cite{INS2}. In particular, the authors discuss
in \cite{INS2} the concept of entropy associated to the volume of
geodesic balls in a complete noncompact Riemannian manifold.

Theorem \ref{Theorem A1} has been extended to hypersurfaces with
constant mean curvature by Alencar and do Carmo \cite{AC}. A crucial
point is to replace the second fundamental form $A$ of the immersion
by the traceless second fundamental form $\phi=-A+HI$, here $H$
denotes the mean curvature of $M^{n}$. More precisely,

\begin{teo}[\cite{AC}] \label{Theorem A2}
Let $M^{n}$ be a complete noncompact hypersurface in $\mathbb{R}^{n+1}$,
$n\leq5$, with constant mean curvature $H$. Assume that M is strongly stable
and that
\be\no
\lim_{R\rightarrow +\infty} \frac{\int_{B_p(R)}|\phi|^{2}}{R^{2q+2}} = 0,
\quad q<\frac{1}{6n+1}.
\en
Then $M^{n}$ is a hyperplane.
\end{teo}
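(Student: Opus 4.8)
The plan is to carry out a do Carmo--Peng type argument for the traceless tensor $\phi$, reducing everything to the claim $\phi\equiv0$: once $M^n$ is totally umbilical, being complete and noncompact it cannot be a round sphere, so it must be a hyperplane. The first ingredient is analytic. Since the ambient space is flat, $\mathrm{Ric}(\nu)=0$ for the unit normal $\nu$, so strong stability gives, for every compactly supported Lipschitz function $f$,
\[
\int_M |A|^2 f^2 \le \int_M |\nabla f|^2 ,
\]
and writing $|A|^2=|\phi|^2+nH^2$ the left-hand side equals $\int_M(|\phi|^2+nH^2)f^2$.

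The second ingredient is the Simons identity. With $g=|\phi|$, and using that $H$ is constant and the ambient curvature vanishes, it reduces to
\[
\tfrac12\Delta|\phi|^2=|\nabla\phi|^2+nH\,\mathrm{tr}(\phi^3)+nH^2|\phi|^2-|\phi|^4 .
\]
I then invoke the refined Kato inequality $|\nabla\phi|^2\ge(1+\tfrac2n)|\nabla|\phi||^2$, valid because $\phi$ is a traceless Codazzi tensor, and Okumura's estimate $|\mathrm{tr}(\phi^3)|\le\frac{n-2}{\sqrt{n(n-1)}}|\phi|^3$. Together with $\tfrac12\Delta g^2=g\Delta g+|\nabla g|^2$ these yield, weakly across the zero set of $g$,
\[
g\,\Delta g\ \ge\ \tfrac2n|\nabla g|^2-g^4-\beta_n|H|g^3+nH^2g^2,\qquad \beta_n=\tfrac{(n-2)\sqrt n}{\sqrt{n-1}} .
\]

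Next I test stability with $f=g^{1+q}\eta$ for a cutoff $\eta$, and separately multiply the differential inequality by $g^{2q}\eta^2$ and integrate by parts. Eliminating the common term $\int g^{2q}\eta^2|\nabla g|^2$ between the two inequalities gives, with $K=\frac{(1+q)^2}{1+2q+2/n}$,
\[
(1-K)\!\int g^{4+2q}\eta^2+(1+K)nH^2\!\int g^{2+2q}\eta^2\ \le\ K\beta_n|H|\!\int g^{3+2q}\eta^2+(\text{terms in }\nabla\eta) .
\]
The decisive step, and the main obstacle, is absorbing the cubic term through Young's inequality $|H|g^{3+2q}\le\frac{\lambda}{2}g^{4+2q}+\frac1{2\lambda}H^2g^{2+2q}$: one needs a single $\lambda$ keeping the coefficients of both $\int g^{4+2q}\eta^2$ and $H^2\!\int g^{2+2q}\eta^2$ nonnegative. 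That window is nonempty precisely when $K^2\beta_n^2<4n(1-K^2)$, which simplifies to $K<\frac{2\sqrt{n-1}}{n}$. At $q=0$ this reads $n^4<4(n-1)(n+2)^2$, holding exactly for $n\le5$, and the monotonicity of $K(q)$ then cuts the admissible range down to $q<\frac1{6n+1}$. (When $H=0$ the cubic term is absent and the requirement degenerates to $K<1$, i.e. $q<\sqrt{2/n}$, recovering Theorem~\ref{Theorem A1}.)

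Finally, handling the $\nabla\eta$ terms by Cauchy--Schwarz and Young, and choosing $\eta$ equal to $1$ on $B_p(R)$, supported in $B_p(2R)$, with $|\nabla\eta|\le C/R$, leaves $\int_{B_p(R)}g^{4+2q}\le\frac{C}{R^2}\int_{B_p(2R)}g^{2+2q}$. Since $2+2q=2(1-t)+(4+2q)t$ with $t=\frac{q}{1+q}\in(0,1)$, Hölder gives $\int g^{2+2q}\le(\int g^2)^{1-t}(\int g^{4+2q})^t$, and a further Young step converts this into $F(R)\le\gamma F(2R)+C'(R)$, where $F(R)=\int_{B_p(R)}g^{4+2q}$, the constant $\gamma$ can be made as small as we wish, and $C'(R)=c\,R^{-(2+2q)}\int_{B_p(2R)}|\phi|^2$. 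The exponent $2q+2$ in the hypothesis is exactly $2/(1-t)$, which is why $C'(R)\to0$; the standard do Carmo--Peng iteration then forces $\int_M|\phi|^{4+2q}=0$, so $\phi\equiv0$ and $M^n$ is a hyperplane.
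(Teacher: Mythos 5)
This statement is quoted from Alencar--do Carmo \cite{AC}; the paper gives no proof of it, so your proposal can only be measured against the standard argument. On the whole you have reconstructed that argument correctly: the stability inequality tested with $f=|\phi|^{1+q}\eta$, the Simons identity sharpened by the refined Kato inequality for the traceless Codazzi tensor $\phi$ and by Okumura's estimate, the elimination of the common gradient term with the constant $K=\frac{(1+q)^2}{1+2q+2/n}$, and the absorption of the cubic term by Young's inequality are all the right ingredients. Your bookkeeping is also right: the solvability window $K^2\beta_n^2<4n(1-K^2)$ reduces to $K<\frac{2\sqrt{n-1}}{n}$, at $q=0$ this is $n^4<4(n-1)(n+2)^2$, which holds exactly for $n\le5$, and the $H=0$ degeneration to $q<\sqrt{2/n}$ correctly recovers the do Carmo--Peng theorem. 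The one assertion you leave unverified is that $q<\frac{1}{6n+1}$ lands inside the admissible window for every $n\le 5$; since $K(q)$ is increasing this is a finite check and the stated range is in fact (comfortably) sufficient, but you should say so rather than attribute it to ``monotonicity'' alone.

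The genuine gap is in your endgame. From $\int_{B_R}|\phi|^{4+2q}\le\frac{C}{R^2}\int_{B_{2R}}|\phi|^{2+2q}$ you interpolate and apply Young to reach $F(R)\le\gamma F(2R)+C'(R)$ with $C'(R)\to0$, and then invoke ``the standard iteration.'' But iterating gives $F(R)\le\gamma^kF(2^kR)+\sum_j\gamma^jC'(2^jR)$, and the term $\gamma^kF(2^kR)$ is not known to tend to zero: the hypothesis controls only the $L^2$ growth of $\phi$, not the growth of $F(r)=\int_{B_r}|\phi|^{4+2q}$, so no a priori polynomial (or even subexponential) bound on $F$ is available and the recursion does not close. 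The standard repair, which is exactly what this paper does at the end of its proof of Theorem \ref{th2}, is to avoid the iteration altogether: substitute $f^{1+q}$ for the cutoff in the Caccioppoli inequality $\int|\phi|^{4+2q}f^2\le C\int|\phi|^{2+2q}|\nabla f|^2$, apply H\"older with exponents $\frac{q+1}{q}$ and $q+1$ so that the full integral $\int|\phi|^{2(q+2)}f^{2(q+1)}$ reappears on the right raised to the power $\frac{q}{q+1}$, and divide by that (finite, since $f$ is compactly supported and $|\phi|$ is locally bounded) quantity to obtain
\begin{equation*}
\int_{B_p(R)}|\phi|^{2(q+2)}\ \le\ \frac{C}{R^{2q+2}}\int_{B_p(2R)}|\phi|^{2},
\end{equation*}
whose right-hand side tends to zero by hypothesis. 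With that substitution your proof is complete.
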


Theorem \ref{Theorem A2} were slightly improved by do Carmo
and Zhou \cite{CZ}, by changing the integral condition in Theorem
\ref{Theorem A2} to a slightly weaker condition, and improving the
restriction on the dimension to $n\leq6$. There are several other
interesting results that generalize Theorem \ref{Theorem A1},
including for other ambient spaces; for instance, see \cite{BM},
\cite{BW}, \cite{INS}, \cite{INS2}, \cite{NQ}, \cite{NQX}, among
others.

The goal of this paper is to give an improved version of Theorem
\ref{Theorem A2} in order to obtain umbilical hypersurfaces in spaces
of constant sectional curvature. We exchange the stability condition
by a condition in the norm of the traceless second fundamental $\phi$,
that makes no restrictions on the dimension of the hypersurface.

%spaces without restriction in dimension, since Alencar-do Carmo and
%Carmo-Zhou required $n\leq5$ and $n\leq6$, respectively, and we
%consider a more general environment than $\mathbb{R}^{n}$.

Given a complete hypersurface $M^{n}$ with constant mean curvature
$H$ in a Riemannian manifold $\mathbb{M}^{n+1}(c)$, with constant
sectional curvature $c$, we will denote the first eigenvalue
$\lambda_{1}(L_{\triangle+|A|^{2}+\overline{Ric}(\nu)},M^{n})$ of the
strong stability operator of $M^{n}$ by $\overline{\lambda}_{1}$ (see
Section \ref{sec:Preliminaries} for details). In our first main result
we will consider the polynomial
\be\label{1.1}
P_{H}(x)= x^{2}+\theta Hx-\beta,
\en
where the constants  $\theta$ and $\beta$ are defined as
\be\label{1.2}
\theta:= \dfrac{n(n-2)}{\sqrt{n(n-1)}} \quad\text{and}\quad
\beta:=n(H^{2}+c).
\en
The polynomial $P_{H}$, which depends on $n$, $H$ and $c$, will help us
to understand the relationship between the norm of the traceless second
fundamental $\phi$ and the positive root of $P_{H}$, giving us conditions
for $M^n$, in the case of constant mean curvature, to be umbilical.
More precisely,

\begin{theorem} \label{th1}
Let $M^{n}$ be a complete hypersurface with constant mean curvature $H$,
immersed in a Riemannian manifold $\mathbb{M}^{n+1}(c)$ of constant
sectional curvature $c$, with $H^{2}+c>0$ if $c<0$. Assume that
\be
\label{1.3} \lim_{R\rightarrow +\infty} \frac{\int_{B_p(R)}
|\phi|^{2q+2}}{R^2} = 0, \ \ q>0,
\en
and
\be\label{1.4}
\sup_{x \in M} |\phi|<r_{H},
\en
where $r_{H}$ is a positive root of the polynomial $P_{H}$ defined in
\eqref{1.1}. Then $M$  is totally umbilical.
\end{theorem}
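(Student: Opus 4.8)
The plan is to combine the Simons-type inequality for the traceless second fundamental form with a Caccioppoli (cut-off) argument driven by the growth hypothesis \eqref{1.3}. First I would fix the orientation so that $H\geq 0$ and start from the Simons formula for a constant mean curvature hypersurface in $\mathbb{M}^{n+1}(c)$. Estimating the cubic term $nH\,\mathrm{tr}(\phi^3)$ from below by Okumura's inequality $|\mathrm{tr}(\phi^3)|\leq \tfrac{n-2}{\sqrt{n(n-1)}}|\phi|^3$ yields
\[
\tfrac{1}{2}\Delta|\phi|^2 \;\geq\; |\nabla\phi|^2 + |\phi|^2\big(\beta-\theta H|\phi|-|\phi|^2\big) \;=\; |\nabla\phi|^2 - |\phi|^2 P_H(|\phi|),
\]
with $\theta,\beta$ as in \eqref{1.2}. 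Since $H^2+c>0$ we have $\beta>0$, so $P_H'(x)=2x+\theta H>0$ on $(0,\infty)$ and $P_H$ has a single positive root $r_H$; hypothesis \eqref{1.4}, i.e. $s:=\sup_M|\phi|<r_H$, then forces $-P_H(|\phi|)\geq -P_H(s)=:\delta>0$ pointwise on $M$. Discarding the nonnegative term $|\nabla\phi|^2$, I get the clean differential inequality $\Delta|\phi|^2\geq 2\delta|\phi|^2$ with a uniform positive constant $\delta$. This is exactly the point where the pointwise pinching \eqref{1.4} replaces any stability or dimension restriction, so no improved Kato inequality is needed.

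Next I would run the integral estimate. Writing $v:=|\phi|^2$ and testing $\Delta v\geq 2\delta v$ against $v^{q}\eta^2$, where $\eta$ is a standard cut-off with $\eta\equiv 1$ on $B_p(R)$, $\mathrm{supp}\,\eta\subset B_p(2R)$ and $|\nabla\eta|\leq C/R$, integration by parts together with a Young inequality on the cross term gives
\[
2\delta\int v^{q+1}\eta^2 \;\leq\; (\epsilon-q)\int v^{q-1}\eta^2|\nabla v|^2 + \tfrac{1}{\epsilon}\int v^{q+1}|\nabla\eta|^2 .
\]
Choosing $0<\epsilon<q$ makes the first term on the right nonpositive, so it may be dropped, leaving
\[
\int_{B_p(R)} |\phi|^{2q+2} \;\leq\; \frac{C'}{R^2}\int_{B_p(2R)} |\phi|^{2q+2}.
\]
The matching of the test-function exponent $q$ with the exponent $2q+2$ appearing in \eqref{1.3} is precisely what makes an arbitrary $q>0$ admissible.

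Finally, setting $I(R)=\int_{B_p(R)}|\phi|^{2q+2}$, the last inequality reads $I(R)\leq 4C'\,I(2R)/(2R)^2$, whose right-hand side tends to $0$ by \eqref{1.3}; since $I$ is nondecreasing and nonnegative, letting $R\to+\infty$ forces $I\equiv 0$, hence $\phi\equiv 0$ and $M$ is totally umbilical. As for the difficulties, the genuinely delicate input is deriving the Simons inequality in the sharp form that produces exactly $P_H$ — in particular tracking the ambient contribution $nc|\phi|^2$ into $\beta=n(H^2+c)$ and applying Okumura's estimate with the correct sign after orienting so that $H\geq 0$. A second, more technical point is the regularity of $v^{q}$ on the zero set of $\phi$ when $q<1$; I would handle this by testing with $(|\phi|^2+\tau)^{q}$ and letting $\tau\downarrow 0$, or by working on $\{|\phi|>0\}$, which is routine. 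Completeness of $M$ is used only to produce the exhausting family of cut-offs $\eta$.
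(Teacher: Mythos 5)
Your proof is correct, but it takes a genuinely different and in fact more economical route than the paper. The paper never forms the pointwise inequality $\Delta|\phi|^2\geq 2\delta|\phi|^2$; instead it keeps the gradient term from the refined Ilias--Nelli--Soret inequality (the $\tfrac{2}{n}|\nabla|\phi||^2$ Kato improvement), multiplies by $f^2|\phi|^{2q+2}$-type weights and integrates by parts to reach a Caccioppoli estimate of the form
\begin{equation*}
\Bigl(\tfrac{n(2q+1)-2}{nq}-2\epsilon\Bigr)\int_M|\nabla|\phi|^{q}|^{2}f^{2}|\phi|^{2}
\;\leq\; q\int_M P_H(|\phi|)\,f^{2}|\phi|^{2q+2}+\tfrac{1}{2\epsilon}\int_M|\phi|^{2q+2}|\nabla f|^{2},
\end{equation*}
uses the pinching only to discard the $P_H$ term by sign, concludes from \eqref{1.3} that $|\nabla|\phi|^q|\equiv 0$ so $|\phi|$ is a constant $k$, and then runs the same inequality a second time to rule out $k>0$. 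You instead convert the \emph{strict} bound $\sup_M|\phi|<r_H$ into a uniform gap $\delta=-P_H(\sup|\phi|)>0$, which lets you drop all gradient information (no improved Kato needed) and obtain the decay of $\int_{B_p(R)}|\phi|^{2q+2}$ in a single cut-off step. What your argument buys is brevity and transparency: one Caccioppoli pass, no two-stage ``constant then contradiction'' argument, and no dependence on the refined Simons inequality. What the paper's heavier machinery buys is reusability: the same inequalities \eqref{2.8} and \eqref{2.10}, with the gradient term retained, are the engine of Theorems \ref{th2} and \ref{th5}, where the gradient integral must be balanced against the eigenvalue $\overline{\lambda}_1$ rather than discarded. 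Your handling of the two delicate points (orienting so that $H\geq 0$ before applying Okumura's estimate, and regularizing $v^q$ near the zero set of $\phi$ when $q<1$) is appropriate; note only that, as in the paper's statement, the existence of a positive root $r_H$ already forces $\beta=n(H^2+c)>0$, so your standing assumption $H^2+c>0$ is implicit in the hypotheses and not an extra restriction.
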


Theorem \ref{th1} generalizes \cite[Theorem 1.2]{BM} for the case of
umbilical hypersurfaces with constant mean curvature. In fact, in
\cite{BM}, the authors obtain a rigidity result for totally geodesic minimal
hypersurfaces in the hyperbolic space $\mathbb{H}^{n}$, with similar
conditions to \eqref{1.3} and \eqref{1.4}.

\vspace{.2cm}

In our second main result we obtain a condition that relates the norm of
the traceless second fundamental form $\phi$ with the first eigenvalue
$\overline{\lambda}_{1}$ of the strongly stability operator, in order to
obtain the umbilicity of the hypersurface. Furthermore, we replace the
condition stability for $M^{n}$ in Theorem \ref{Theorem A2} for a weaker
condition (see \eqref{1.7}) in the first eigenvalue of the strongly stability
operator. For that, we need to define the polynomial
\be\label{1.5}
P_{H,\overline{\lambda}_{1}}(x):=-x^{2}-\theta Hx+
\left(1+\frac{1}{q}\right)\beta+\dfrac{1}{q}\overline{\lambda}_{1},
\en
where $q$ is a positive constant.

\begin{theorem} \label{th2}
Let $M^{n}$ be a complete hypersurface with constant mean curvature $H$
immersed in a Riemannian manifold $\mathbb{M}^{n+1}(c)$ of constant
sectional curvature  $c$. Suppose that
\be\label{1.6}
\lim_{R\rightarrow +\infty} \frac{\int_{B_p(R)}|\phi|^{2}}{R^{2q+2}} = 0,
\quad\text{with}\quad \frac{-q^{2}+q+1}{q}<\frac{2}{n}.
\en
If the first eigenvalue of the strongly stability operator satisfies
\be\label{1.7}
\overline{\lambda}_{1}>-n(q+1)(H^{2}+c)
\en
and
\be\label{1.8}
\sup_{x \in M} |\phi|<r_{H},
\en
where $r_{H}$  is a positive root of $P_{H,\overline{\lambda}_{1}}$ defined in (\ref{1.5}), then $M^{n}$ is totally umbilical.
\end{theorem}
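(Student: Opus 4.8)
The plan is to combine a Simons-type inequality for the traceless second fundamental form with the variational characterization of $\overline{\lambda}_1$, and then run a cutoff argument tuned so that the integral hypothesis in \eqref{1.6} closes the estimate. Writing $u=|\phi|$, the Simons equation for a constant mean curvature hypersurface in a space form, together with the algebraic estimate $|\mathrm{tr}\,\phi^{3}|\le \frac{n-2}{\sqrt{n(n-1)}}\,|\phi|^{3}$, yields $\tfrac12\Delta u^{2}\ge |\nabla\phi|^{2}-u^{2}P_{H}(u)$ after fixing the orientation so that $H\ge 0$, where $P_H$ is the polynomial in \eqref{1.1}. Applying the refined Kato inequality $|\nabla\phi|^{2}\ge(1+\tfrac2n)|\nabla u|^{2}$, this becomes $u\,\Delta u\ge \tfrac2n|\nabla u|^{2}-u^{2}P_{H}(u)$. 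On the analytic side I would use that, since $\overline{\lambda}_1$ is the bottom of the spectrum of $\Delta+|A|^2+\overline{\mathrm{Ric}}(\nu)$ and $|A|^{2}+\overline{\mathrm{Ric}}(\nu)=u^{2}+\beta$ in a space form (here $\overline{\mathrm{Ric}}(\nu)=nc$), every compactly supported $f$ satisfies $\int_M (u^{2}+\beta+\overline{\lambda}_1)f^{2}\le\int_M|\nabla f|^{2}$.

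The core computation is to feed the test function $f=u^{q+1}\eta^{q+1}$ into the stability inequality and to multiply the Simons inequality by $u^{2q}\eta^{2q+2}$, where $\eta$ is a cutoff. After integrating by parts in the Simons term and using the stability inequality to control the leftover term $\tfrac1q(\beta+\overline{\lambda}_1)\int u^{2q+2}\eta^{2q+2}$, the two inequalities assemble into a single estimate whose left-hand side is exactly $\int_M P_{H,\overline{\lambda}_1}(u)\,u^{2q+2}\eta^{2q+2}$, with $P_{H,\overline{\lambda}_1}$ the polynomial in \eqref{1.5}. The hypothesis \eqref{1.7}, namely $\overline{\lambda}_1>-n(q+1)(H^2+c)$, is precisely what makes $P_{H,\overline{\lambda}_1}(0)>0$, so that $P_{H,\overline{\lambda}_1}$ has a positive root $r_H$ and stays positive on $[0,r_H)$; combined with \eqref{1.8} this gives a uniform bound $P_{H,\overline{\lambda}_1}(u)\ge\gamma_0>0$, whence $\gamma_0\int u^{2q+2}\eta^{2q+2}$ is a lower bound for the left-hand side. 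On the right-hand side the coefficient of the leading gradient term $\int u^{2q}|\nabla u|^2\eta^{2q+2}$ works out to $\frac{-q^2+q+1}{q}-\frac2n$, which is negative by the inequality in \eqref{1.6}; after absorbing the cross term by a Cauchy--Schwarz and Young step that gradient term may be discarded.

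This leaves $\gamma_0\int u^{2q+2}\eta^{2q+2}\le C\int u^{2q+2}\eta^{2q}|\nabla\eta|^2$. The decisive device is a weighted Young inequality applied to the right-hand integrand, splitting it as a small multiple of $u^{2q+2}\eta^{2q+2}$, which is absorbed into the left-hand side, plus a constant multiple of $u^{2q+2}|\nabla\eta|^{2q+2}$; this is what converts one factor $|\nabla\eta|^{2}\sim R^{-2}$ into $|\nabla\eta|^{2q+2}\sim R^{-(2q+2)}$. Using the pointwise bound $u\le\sup_M|\phi|<\infty$ from \eqref{1.8} to replace $u^{2q+2}$ by a constant times $u^{2}$, and choosing the standard cutoff equal to $1$ on $B_p(R)$, supported in $B_p(2R)$ with $|\nabla\eta|\le C/R$, I arrive at $\int_{B_p(R)}|\phi|^{2q+2}\le \frac{C}{R^{2q+2}}\int_{B_p(2R)}|\phi|^{2}$. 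Letting $R\to\infty$ and invoking \eqref{1.6} forces $\int_M|\phi|^{2q+2}=0$, so $\phi\equiv0$ and $M$ is totally umbilical.

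The step I expect to be the main obstacle is the coefficient bookkeeping that makes the two independent hypotheses fall out simultaneously: the exponent pair in $f=u^{q+1}\eta^{q+1}$ must be chosen so that the Simons and stability combination produces exactly $P_{H,\overline{\lambda}_1}$ and the gradient coefficient $\frac{-q^2+q+1}{q}-\frac2n$, while the subsequent Young splitting must deliver the denominator $R^{2q+2}$ demanded by \eqref{1.6} rather than the $R^{2}$ that a more naive cutoff would give. Keeping the Kato constant, the eigenvalue contribution, and the two powers $q+1$ consistent through the integration by parts is where the delicate algebra lies.
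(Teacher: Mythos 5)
Your proposal is correct and follows essentially the same route as the paper: the Ilias--Nelli--Soret Simons-type inequality with the Kato constant $2/n$, the eigenvalue characterization of $\overline{\lambda}_1$ tested against $f|\phi|^{q+1}$, the combination of the two producing $P_{H,\overline{\lambda}_1}$ with the gradient coefficients closing exactly under the condition $\frac{-q^2+q+1}{q}<\frac{2}{n}$, and a cutoff argument upgrading $R^{-2}$ to $R^{-(2q+2)}$. The only (harmless) divergence is the endgame: the paper substitutes $f\to f^{q+1}$ and applies H\"older's inequality to reach $\int_{B_p(R)}|\phi|^{2(q+2)}\le C R^{-(2q+2)}\int_{B_p(2R)}|\phi|^{2}$ without ever invoking the pointwise bound on $|\phi|$, whereas you keep $\int|\phi|^{2q+2}\eta^{2q+2}$ on the left via a positive lower bound for $P_{H,\overline{\lambda}_1}$ and use Young's inequality plus $\sup_M|\phi|<r_H$ to trade $|\phi|^{2q+2}$ for $|\phi|^{2}$ in the error term --- both steps are valid and deliver the hypothesis \eqref{1.6} in the same way.
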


\begin{remark}
In Theorem $\ref{th2}$, when $n\leq5$ and $M^{n}$ is a complete
noncompact hypersurface immersed in a form space $\mathbb{Q}^{n+1}_c$,
condition $H^{2}+c>0$ together with equation \eqref{1.7} imply that the
first eigenvalue $\overline{\lambda}_{1}$ must satisfy $\overline{\lambda}_{1}<0$,
that is, $M^n$ must be unstable. In fact, in this case, it follows from
\cite[Corollary 6.3]{INS} that there is no complete noncompact stable
hypersurface with constant mean curvature $H\neq0$ in $\mathbb{R}^{n+1}$ or $\mathbb{S}^{n+1}$, or in $\mathbb{H}^{n+1}$, when
 $H^{2}>g(n)$, where
%= \mathbb{R}^{n+1}$ or $\mathbb{S}^{n+1}$, or $H^{2}>g(n)$,  $\mathbb{Q}^{n+1}(c)=\mathbb{H}^{n+1}$ and
\be\no
g(n)=\frac{\left(\frac{n+2}{n}\right)^{2}-1}{\left(\frac{n+2}{n}\right)^{2}-\frac{n^{2}}{4(n-1)}}.
\en
\end{remark}

%The Remark 1.3 toghether with

%Some partial answers for generalization of the Do Carmo-Peng Theorem (Theorem A) can also be found in \cite{INS2}, where the authors make a study of the concepts of entropy associated to the volume of geodesic balls
%in  a complete, noncompact Riemannian manifold.

In the case that $M^n$ is a minimal hypersurface of a Riemannian manifold
$\mathbb{M}^{n+1}(c)$ with constant sectional curvature $c$, the integral
condition \eqref{1.6} becomes the same as in Theorem \ref{Theorem A1}.
Thus, when $\mathbb{M}^{n+1}(c)=\mathbb{R}^{n+1}$, condition \eqref{1.7}
means that $M$ is stable, and we conclude that $M^{n}$ is a hiperplane.
In this case, we recover the do Carmo-Peng's Theorem.

\begin{Corolario} \label{c4}
Under the conditions of Theorem \ref{th2}, if $M^{n}$ is a minimal hypersurface
in the Euclidean space $\mathbb{R}^{n+1}$, then $M^{n}$ is a hiperplane.
\end{Corolario}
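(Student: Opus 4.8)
The plan is to derive Corollary \ref{c4} as a direct specialization of Theorem \ref{th2}, verifying that the minimal Euclidean setting satisfies every hypothesis of that theorem and that the resulting totally umbilical hypersurface must in fact be a hyperplane. First I would set $c=0$ and $H=0$, which is the defining data of a minimal hypersurface in $\mathbb{R}^{n+1}=\mathbb{M}^{n+1}(0)$. With these substitutions the constant $\beta=n(H^2+c)$ from \eqref{1.2} vanishes, and the linear coefficient $\theta H$ in the polynomials also vanishes. Consequently $P_{H,\overline{\lambda}_1}(x)=-x^2+\tfrac{1}{q}\overline{\lambda}_1$, whose positive root is $r_H=\sqrt{\overline{\lambda}_1/q}$, so condition \eqref{1.8} is meaningful precisely when $\overline{\lambda}_1\geq 0$.

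Next I would check the remaining hypotheses. The integral condition \eqref{1.6} with $H=0$ reads $\lim_{R\to+\infty}\int_{B_p(R)}|\phi|^2/R^{2q+2}=0$; but for a minimal hypersurface $H=0$ gives $\phi=-A$, so $|\phi|=|A|$, and this is exactly the growth condition appearing in Theorem \ref{Theorem A1}, as the paragraph preceding the corollary already notes. The eigenvalue condition \eqref{1.7} becomes $\overline{\lambda}_1>-n(q+1)(H^2+c)=0$, i.e. $\overline{\lambda}_1>0$; since the strong stability operator in the Euclidean minimal case reduces to the usual stability (Jacobi) operator $L=\Delta+|A|^2$, the inequality $\overline{\lambda}_1\geq 0$ is precisely the statement that $M^n$ is stable. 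The admissibility of the range of $q$ is inherited unchanged from \eqref{1.6}. Thus all hypotheses of Theorem \ref{th2} hold, and we conclude that $M^n$ is totally umbilical.

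It then remains to upgrade totally umbilical to hyperplane. A totally umbilical hypersurface has $\phi\equiv 0$, so the second fundamental form is $A=HI=0$ in the minimal case, meaning $M^n$ is totally geodesic in $\mathbb{R}^{n+1}$. Since the complete totally geodesic hypersurfaces of Euclidean space are exactly the affine hyperplanes, this gives the conclusion. I would phrase this last step as the observation that $|\phi|=|A|$ and umbilicity together force $A\equiv 0$, whence $M^n$ is an affine $n$-plane.

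The only genuinely delicate point, and the one I would state with care rather than leave implicit, is the identification of the strong stability operator eigenvalue condition \eqref{1.7} with the classical stability used in Theorem \ref{Theorem A1}. Everything else is a mechanical substitution; the content is that, in the Euclidean minimal case, $\overline{Ric}(\nu)=0$ and $H=0$ collapse the operator $L_{\Delta+|A|^2+\overline{Ric}(\nu)}$ to $\Delta+|A|^2$, so that $\overline{\lambda}_1>0$ is the stability hypothesis, and thereby we recover do Carmo–Peng's theorem verbatim.
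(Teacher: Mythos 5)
Your proposal is correct and follows essentially the same route as the paper: the corollary is obtained by specializing Theorem \ref{th2} to $H=0$, $c=0$ (so $\beta=0$, $|\phi|=|A|$, and the root condition \eqref{1.8} becomes $|A|^2\le\overline{\lambda}_1/q$), and then observing that a totally umbilical minimal hypersurface is totally geodesic, hence an affine hyperplane. The only difference is presentational --- the paper re-runs inequality \eqref{3.3} explicitly in this special case rather than invoking Theorem \ref{th2} as a black box --- and your one interpretive remark should be stated slightly more carefully, since \eqref{1.7} gives the strict inequality $\overline{\lambda}_1>0$, which is (strictly) stronger than stability $\overline{\lambda}_1\ge 0$.
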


A natural question is whether condition \eqref{1.8} in Theorem \ref{th2}
can be removed, since it does not appear in Theorems \ref{Theorem A1}
and \ref{Theorem A2}. In fact, this is the case for low dimensions.

\begin{theorem} \label{th5}
Let $M^{n}$ be a complete hypersurface, $n=2,3$, with constant
mean curvature $H$, immersed in a Riemannian manifold
$\mathbb{M}^{n+1}(c)$ of constant sectional curvature $c$. Suppose that
\be\label{1.9}
\lim_{R\rightarrow +\infty} \frac{\int_{B_p(R)}|\phi|^{2q+2}}{R^2} = 0,
\quad q<\frac{1}{3}\left(\sqrt{\frac{2(6-n)}{n}}-1\right).
\en
If
\be\label{1.10}
\overline{\lambda}_{1}>n\left[\dfrac{(q+1)^{2}}{2(n+2nq+2)}(1-2n(H^{2}+c)
-(H^{2}+c)\right],
\en
then $M^{n}$ is totally umbilical.
\end{theorem}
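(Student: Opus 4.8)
The plan is to follow the same Bochner–type scheme that proves Theorem \ref{th2}, but to control the quartic term $\int|\phi|^{2q+4}$ purely through the first eigenvalue and the growth hypothesis \eqref{1.9}, rather than through a pointwise bound such as \eqref{1.8}. Writing $f=|\phi|$, the starting point is the Simons–type inequality for a constant mean curvature hypersurface of $\mathbb{M}^{n+1}(c)$; after invoking Okumura's estimate for $\mathrm{tr}(\phi^{3})$ (which produces the coefficient $\theta$) and the refined Kato inequality $|\nabla\phi|^{2}\ge(1+\tfrac{2}{n})|\nabla f|^{2}$, this reads
\[ f\Delta f\ \ge\ \tfrac{2}{n}|\nabla f|^{2}-f^{2}P_{H}(f),\qquad P_{H}(f)=f^{2}+\theta Hf-\beta . \]
Multiplying by $f^{2q}\eta^{2}$, where $\eta$ is a cutoff equal to $1$ on $B_{p}(R)$, supported in $B_{p}(2R)$, with $|\nabla\eta|\le C/R$, and integrating by parts produces an inequality in which the only obstructions to coercivity are the quartic term $\int f^{2q+4}\eta^{2}$ and the cubic term $\theta H\int f^{2q+3}\eta^{2}$; the gradient terms collect with coefficient $2q+1+\tfrac{2}{n}$, which is exactly $(n+2nq+2)/n$, explaining the denominator appearing in \eqref{1.10}.

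The quartic term is absorbed through the variational characterisation of $\overline{\lambda}_{1}$. Since $|A|^{2}+\overline{\mathrm{Ric}}(\nu)=f^{2}+\beta$ on a space form, the eigenvalue inequality $\int(f^{2}+\beta)g^{2}\le\int|\nabla g|^{2}-\overline{\lambda}_{1}\int g^{2}$ tested against $g=f^{q+1}\eta$ gives
\[ \int f^{2q+4}\eta^{2}\ \le\ (q+1)^{2}\!\int f^{2q}|\nabla f|^{2}\eta^{2}-(\beta+\overline{\lambda}_{1})\!\int f^{2q+2}\eta^{2}+(\text{terms in }\nabla\eta). \]
Substituting this bound back, the coefficient of $\int f^{2q}|\nabla f|^{2}\eta^{2}$ collapses to $(q+1)^{2}-(2q+1+\tfrac{2}{n})=q^{2}-\tfrac{2}{n}$, while the zeroth–order term is now governed by $\beta+\overline{\lambda}_{1}$. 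This is precisely the step that trades the pointwise hypothesis \eqref{1.8} of Theorem \ref{th2} for a lower bound on $\overline{\lambda}_{1}$.

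The delicate point, and the reason the statement is confined to $n=2,3$, is the cubic term. When $n=2$ one has $\theta=0$ and it simply disappears; when $n=3$ one estimates $|\theta H|f^{2q+3}$ by Young's inequality against a reserved negative multiple of the quartic (kept by applying the eigenvalue inequality with a factor larger than one) plus a multiple of $f^{2q+2}$. This reserved quartic eats into the gradient budget, so the effective coefficient of $\int f^{2q}|\nabla f|^{2}\eta^{2}$ must remain negative after the absorption; tracking the constants turns this requirement into the explicit bound $q<\tfrac{1}{3}\bigl(\sqrt{2(6-n)/n}-1\bigr)$ of \eqref{1.9}, equivalently $n\bigl[(3q+1)^{2}+2\bigr]<12$, which is satisfiable only for $n\le 3$. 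The remaining zeroth–order coefficient is then non‑negative exactly when $\overline{\lambda}_{1}$ obeys \eqref{1.10}.

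After these two absorptions one arrives at an inequality of the form
\[ a\!\int_{M} f^{2q}|\nabla f|^{2}\eta^{2}+b\!\int_{M} f^{2q+2}\eta^{2}\ \le\ C\!\int_{M} f^{2q+2}|\nabla\eta|^{2}, \]
with $a,b>0$ guaranteed by \eqref{1.9} and \eqref{1.10} respectively, the residual cross–terms $\int f^{2q+1}\eta\,\nabla f\cdot\nabla\eta$ being absorbed into the first term by one further Young inequality (here the strict inequality in \eqref{1.9} supplies the needed slack). Letting $R\to\infty$, the right–hand side is at most $\tfrac{C}{R^{2}}\int_{B_{p}(2R)}f^{2q+2}\to 0$ by \eqref{1.9}, whence $\int_{M}f^{2q+2}=0$ and therefore $\phi\equiv 0$, so $M$ is totally umbilical. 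I expect the main obstacle to be exactly the simultaneous control of the quartic and cubic terms without a pointwise bound on $|\phi|$: one must retain enough of the negative quartic to dominate the cubic while still keeping the net gradient coefficient $\tfrac{2}{n}-q^{2}$ strictly positive, and it is this balancing that both caps the dimension at $n=3$ and pins down the admissible range of $q$ together with the eigenvalue threshold \eqref{1.10}.
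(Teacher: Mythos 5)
Your proposal is correct and follows essentially the same route as the paper: the Ilias--Nelli--Soret Simons-type inequality turned into a Caccioppoli estimate with weight $|\phi|^{2q}\eta^2$, the eigenvalue characterisation of $\overline{\lambda}_1$ tested against $|\phi|^{q+1}\eta$, Young's inequality to trade the cubic term for quartic plus quadratic pieces, and a weighted combination whose quartic and zeroth-order coefficients are positive precisely under \eqref{1.9} (equivalently $n(3q^2+2q+1)<4$, forcing $n\le 3$) and \eqref{1.10}. Your identification of where each hypothesis enters, including the denominator $n+2nq+2$ as the collected gradient coefficient, matches the paper's computation.
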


The paper is organized as follows. In the next section we recall some basic
analytic tools of Riemannian manifolds. In particular, we recall the first
eigenvalue of the Laplacian operator and the notion of stability to 
hypersurfaces with constant mean curvature. In Section 
\ref{sec:inequalities} we establish some important inequalities that will be
used throughout the paper. Finally, in the last section we prove the
results established in Section \ref{sec:Introd}.

\section{Preliminaries} \label{sec:Preliminaries}

Given a complete Riemannian manifold $M^n$ and a smooth function
\linebreak
$\beta:M\to\R$, we denote by $\Delta$ and $\lambda_1(L_\beta,M)$
the Laplacian operator acting on the space $C^\infty(M)$ and the first
eigenvalue of the operator $L_\beta=\Delta+\beta$, respectively.
More precisely, $\lambda_1(L_\beta,M)$ is defined by
\be	\label{2.1}
\lambda_{1}(L_{\beta},M)= \inf_{f\in C_0^{\infty}(M), f\neq 0}
\frac{\int_{M}(|\nabla f|^{2}-\beta f^{2})}{\int_{M} f^{2}}.
\en
Note that, when $\beta=0$, $\lambda_1(L_0,M)$ recover the usual
first eigenvalue of $M$.

When $M^n$ is an oriented complete hypersurface of a Riemannian
manifold $\overline{M}^{n+1}$, with a unit normal vector field $\nu$,
let $A$ be the shape operator of $M^n$ with respect to $\nu$ given by
\be\no
AX = \overline\nabla_X\nu,
\en
for all tangent vector field $X$, where $\overline{\nabla}$ denotes the
Levi-Civita connection of $\overline{M}^{n+1}$. Fixed a point $p\in M$,
the principal curvatures $k_1\leq k_2\leq\ldots\leq k_n$ of $M$ at $p$
with respect to $\nu(p)$ are defined as the eigenvalues of $A(p)$.
The mean curvature $H$ of $M^n$ at $p$ is defined by
\[
H:=\frac{1}{n}\sum_{i=1}^{n}k_{i},
\]
and the square of the second fundamental form of $M^n$ is given by
\[
|A|^{2}:=\sum_{i=1}^{n}k_{i}^{2}.
\]
Moreover, we will denote by $\overline{Ric}(\nu)$ the Ricci curvature of
$\overline{M}^{n+1}$ along $\nu$.
%Here $\overline{Ric}(\nu)=\sum_{i=1}^{n}K(e_{i}\wedge \nu)$.

Let $L$ be the second order differential operator on $M$ given by
\be\no
L=\triangle+|A|^{2}+\overline{Ric}(\nu).
\en
Associated to the operator $L$, we define a quadratic form on the functions
$f\in C^\infty(M)$ that have support on a compact domain $K\subset M$ by
\be\no
I(f)=-\int_{M} fLf.
\en
For each such compact domain $K$, we define the index $ind_{L}K$ of $L$
on $K$ as the maximal dimension of a subspace where
$I$ is negative definite. The index $indM$ of $L$ in $M$ is the number defined by
\be\no
indM = \sup_{K\subset M} \ ind_{L}K,
\en
where the supremum is taken over all compact domains $K\subset M$. $M$ is said to be stable if its index is null, that is, $indM =0$.

When $M$ is minimal, equivalently we say that $M$ is {\em stable} if for every piecewise
smooth functions $f:M\to\mathbb{R}$ with compact support, we have
\be\label{2.2}
\int_{M}|\nabla f|^{2} -\int_{M}(|A|^{2}+\overline{Ric}(\nu))f^{2}\geq0,
\en
where $|\nabla f|$ denotes the gradient of $f$ in the induced metric.

\vspace{.2cm}

The notion of stability has been extended to hypersurfaces with constant
mean curvature. More precisely, we say that a constant mean curvature
hypersurface $M$ is {\em strongly stable} if \eqref{2.2} holds for all
piecewise smooth functions $f:M\to\mathbb{R}$ with compact
support; $M$ is said to be {\em weakly stable} if \eqref{2.2} holds for all
piecewise smooth functions $f:M\to\mathbb{R}$ with compact support
and $\int_{M}f=0$.

\section{Some inequalities} \label{sec:inequalities}

In this section we will establish some inequalities that will be used throughout
the paper. In what follows, we assume that $M^{n}$ is an oriented complete
hypersurface with constant mean curvature $H$, immersed in a Riemannian
manifold $\mathbb{M}^{n+1}(c)$ of constant sectional curvature $c$. In order
to study such hypersufaces, is more convenient to modify the second
fundamental form and to introduce a new linear operator
$\phi:T_{p}M\to T_{p}M$ given by
\be\no
\langle\phi X,Y\rangle=-\langle A X,Y\rangle+H\langle X,Y\rangle.
\en
Note that $\phi$ can also be diagonalized as $\phi e_{i} = \mu_{i}e_{i}$
and $tr\phi=0$. Moreover,
\be\no
|\phi|^{2}:=\sum_{i=1}^{n}\mu_{i}^{2}=\dfrac{1}{2n}\sum_{i,j=1}^{n}(k_{i}-k_{j})^{2}.
\en
Thus $|\phi|^{2}$ measures how far $M$ is from being totally umbilical.

Associated to the operator $\phi$, Ilias, Nelli and Soret obtained in 
\cite[Corollary 3.1]{INS} the following inequality:
\be \label{2.3}
|\phi|\Delta |\phi|+|\phi|^{4} + \frac{n(n-2)}{\sqrt{n(n-1)}}H|\phi|^{3}-
n(H^{2}+c)|\phi|^{2}\geq\dfrac{2}{n}|\nabla|\phi||^{2}.
\en
With the constants $\theta$ and $\beta$ defined in \eqref{1.2}, the inequality
\eqref{2.3} becomes 
\be\label{2.4}
|\phi|\Delta |\phi|\geq\dfrac{2}{n}|\nabla|\phi||^{2}  - |\phi|^{4}-\theta H|\phi|^{3}+
\beta|\phi|^{2}.
\en
Let $q$ be a positive constant. Since that $\Delta |\phi|^{q}=\dive(\nabla |\phi|^{q})$,
we have
\be \label{2.5}
|\phi|^{q}\triangle|\phi|^{q}=
\left(1-\frac{1}{q}\right)|\nabla|\phi|^{q}|^{2}+q|\phi|^{2(q-1)}|\phi|\triangle|\phi|.
\en
Now, replacing \eqref{2.4} in \eqref{2.5}, we get
\be \label{2.6}
\left(1+\frac{2-n}{nq}\right)|\nabla|\phi|^{q}|^{2}\leq q\left(|\phi|^{2}
+\theta H|\phi|-\beta\right)|\phi|^{2q} +|\phi|^{q}\triangle|\phi|^{q}.
\en
Since that $f\in C_{0}^{\infty}(M)$, multiplying the inequality \eqref{2.6} by
$f^{2}|\phi|^{2}$ and integrating over $M$, we get
\begin{equation} \label{2.7}
\begin{aligned}
\left(1+\frac{2-n}{nq}\right)\int_{M}|\nabla|\phi|^{q}|^{2}f^{2}|\phi|^{2}
\leq & q \int_{M}\left(   |\phi|^{2}+\theta H|\phi|-\beta\right)f^{2}|\phi|^{2q+2}\\
+& \int_{M}f^{2}|\phi|^{q+2}\triangle|\phi|^{q}.
\end{aligned}
\end{equation}
By divergence theorem, we have
\be
\no0&=&\int_{M}div (f^{2}|\phi|^{q+2}\nabla|\phi|^{q}) 
=\int_{M}2f|\phi|^{q+2}\langle\nabla f,\nabla|\phi|^{q}\rangle \\
\no&+&\int_{M}(q+2) f^{2}|\phi|^{q+1}\langle\nabla |\phi|,\nabla|\phi|^{q}\rangle
+\int_{M}f^{2}|\phi|^{q+2}\triangle|\phi|^{q}.
\en
Thus, the inequality \eqref{2.7} becomes
\be
&&\no\left(1+\frac{2-n}{nq}\right)\int_{M}|\nabla|\phi|^{q}|^{2}f^{2}|\phi|^{2}
 \leq  q\int_{M}\left(   |\phi|^{2}+\theta H|\phi|-\beta\right)f^{2}|\phi|^{2q+2}\\
 & &\no-2\int_{M}f|\phi|^{q+2}\langle\nabla f,\nabla|\phi|^{q}\rangle
 -\dfrac{(q+2)}{q}\int_{M} f^{2}|\phi|^{2}|\nabla|\phi|^{q}|^{2}.
\en
By Cauchy-Schwarz inequality, and Young inequality with $\epsilon$, we 
can find a constant $\epsilon>0$ such that
\begin{eqnarray} \label{2.8}
\begin{aligned}
\left(\frac{n(2q+1)-2}{nq}-2\epsilon\right)&\int_{M}|\nabla|\phi|^{q}|^{2}f^{2}|\phi|^{2}
\leq +\frac{1}{2\epsilon}\int_{M}  |\phi|^{2q+2}|\nabla f|^{2} \\
& + q\int_{M}\left( |\phi|^{2}+\theta H|\phi|-\beta\right)f^{2}|\phi|^{2q+2}.
\end{aligned}
\end{eqnarray}

On the other hand, from the definition of the stability operator $L_{|A|^{2}+nc}$ 
defined over $M$, we have the following characterization for the first eigenvalue
$\overline{\lambda}_{1}:=\overline{\lambda}_{1}(L_{|A|^{2}+nc},M^{n})$:
\[
\int_{M}|A|^{2}f^{2}+nc\int_{M}f^{2}+\overline{\lambda}_{1}\int_{M}f^{2}
\leq  \int_{M} |\nabla f|^{2}.
\]
Since $|A|^{2}=|\phi|^{2}+nH^{2}$, we get
\begin{equation} \label{eq:normaA}
\int_{M}|\phi|^{2}f^{2}+\beta\int_{M}f^{2}+\overline{\lambda}_{1}\int_{M}f^{2}
\leq  \int_{M} |\nabla f|^{2}.
\end{equation}
Replacing $f\in C_{0}^{\infty}(M)$ by $f=f|\phi|^{q+1}$ in \eqref{eq:normaA},
we have
\begin{eqnarray} \label{2.9}
\begin{aligned}
\int_{M}|\phi|^{2q+4}f^{2}+&(\beta+\overline{\lambda}_{1})\int_{M}|\phi|^{2q+2}f^{2}
\leq\frac{(q+1)^{2}}{q^{2}} \int_{M}f^{2}|\nabla |\phi|^{q}|^{2} |\phi|^{2} \\
+& 2(q+1) \int_{M} f|\phi|^{2q+1}\langle\nabla f,\nabla\phi\rangle+  \int_{M}|\phi|^{2q+2}|\nabla f|^{2}.
\end{aligned}
\end{eqnarray}
Again, making use of the Cauchy-Schwarz inequality, and Young inequality
with $\epsilon$, we get
\[
2\int_{M} f|\phi|^{2q+1}\langle\nabla f,\nabla\phi\rangle\leq 
2\left(\varepsilon \int_{M}f^{2}|\phi|^{2q}|\nabla|\phi||^{2}+
\dfrac{1}{4\varepsilon }\int_{M}|\phi|^{2q+2}|\nabla f|^{2}\right).
\]
Therefore, we obtain the following inequality:
\begin{eqnarray} \label{2.10}
\begin{aligned}
&\int_{M}|\phi|^{2q+4}f^{2}+(\beta+\overline{\lambda}_{1})\int_{M}|\phi|^{2q+2}f^{2} \\ 
&\leq\left(\left(1+\dfrac{1}{q}\right)^{2}+\dfrac{\varepsilon(q+1)}{q^{2}} \right)
\int_{M}f^{2}|\nabla |\phi|^{q}|^{2}|\phi|^{2} \\
&+\left(1+\dfrac{q+1}{\varepsilon}\right)\int_{M}|\phi|^{2q+2}|\nabla f|^{2}.
\end{aligned}
\end{eqnarray}

\section{Proof of Theorems}

\begin{proof}[Proof of Theorem \ref{th1}.]  
By equation \eqref{2.8}, we have
\begin{eqnarray*}
&&\left(\frac{n(2q+1)-2}{nq}-2\epsilon\right)
\int_{M}|\nabla|\phi|^{q}|^{2}f^{2}|\phi|^{2}\\
&& \leq  q\int_{M}\left( |\phi|^{2}+\theta H|\phi|-
\beta\right)f^{2}|\phi|^{2q+2} +\dfrac{1}{2\epsilon}\int_{M}
|\phi|^{2q+2}|\nabla f|^{2}.
\end{eqnarray*}
Since the polynomial $P_{H}(x)=x^{2}+\theta Hx-\beta$ has two roots
$r_{0}<0<r_{1}$, and if $\sup_{x\in M} |\phi|<r_{1}$, we have
\[
|\phi|^{2}+\theta H|\phi|-\beta<0.
\]
Thus, we get
\begin{equation} \label{eq:proofTheo1}
\left(\frac{n(2q+1)-2}{nq}-2\epsilon\right)\int_{M}|\nabla|\phi|^{q}|^{2}f^{2}
|\phi|^{2}\leq\frac{1}{2\epsilon}\int_{M}  |\phi|^{2q+2}|\nabla f|^{2}.
\end{equation}
Let $f$ be a nonnegative smooth function defined on the interval $[0,+\infty)$
such that $f\equiv1$ in $[0,R]$, $f\equiv0$ in $[2R,+\infty)$, and 
$|f'|\leq\frac{2}{R}$. Consider the composition $f\circ r$, where $r$ is the 
distance function from the point $p$. It follows from \eqref{eq:proofTheo1}
that we can choose a constant $\epsilon>0$ such that
\be \label{3.1}
\int_{B_{p}(R)}|\nabla|\phi|^{q}|^{2}|\phi|^{2} \leq
\frac{C}{R^{2}}\int_{B_{p}(2R)} |\phi|^{2q+2},
\en
where $C$ is a positive constant that depends only on $n$, $q$ and 
$\epsilon$. Taking the limit $R\rightarrow+\infty$, it follows from \eqref{1.3}
that
\be\label{3.2}
&&\int_{M}|\nabla|\phi|^{q}|^{2}|\phi|^{2}=0,
\en
and $|\phi|$ equal to a constant $k$ along $M$. Suppose $k>0$. Since
$\sup |\phi|< r_{1}$ and using inequality \eqref{2.8}, we have
\[
0\leq  q\int_{M}\left( |\phi|^{2}+\theta H|\phi|-\beta\right)f^{2}|\phi|^{2q+2}
\leq\frac{1}{2\epsilon}\int_{M}  |\phi|^{2q+2}|\nabla f|^{2}.
\]
Taking again the function $f\circ r$, defined above, and applying the
hypothesis \eqref{1.3}, it follows that
\[
\int_{M}\left( |\phi|^{2}+\theta H|\phi|-\beta\right)|\phi|^{2q+2}=0.
\]
This shows that $|\phi|^{2}+\theta H|\phi|-\beta=0$, which is a contradiction.
Therefore, we have $|\phi|\equiv0$ and the conclusion follows.
\end{proof}

\vspace{.1cm}

\begin{proof}[Proof of Theorem \ref{th2}.]
From condition \eqref{1.6}, we have
\[
\frac{-q^{2}+q+1}{q}<\dfrac{2}{n}.
\]
Thus, we can find a constant $\epsilon>0$ such that
\[
\left(1+\frac{1}{q}\right)^{2}+\frac{\epsilon(q+1)}{q^{2}} <1+
\frac{2-n+n(q+2)}{nq}-2\epsilon.
\]
In this case, joining the inequalities \eqref{2.8} and \eqref{2.10}, we obtain
the following inequality:
\begin{eqnarray} \label{3.3}
\begin{aligned}
\frac{1}{q}\int_{M}|\phi|^{2q+4}f^{2} + &
\int_{M}\left( -|\phi|^{2}-\theta H|\phi|+\left(\dfrac{q+1}{q}\right)\beta +
\frac{1}{q}\overline{\lambda}_{1}\right)f^{2}|\phi|^{2q+2} \\
\leq & \frac{1}{q}\left(1+\frac{2(q+1)+1}{2\epsilon}\right)
\int_{M}|\phi|^{2q+2}|\nabla f|^{2}.
\end{aligned}
\end{eqnarray}
Considering the polynomial $P_{H,\overline{\lambda}_{1}}$ defined in
\eqref{1.5}, we have:
\begin{eqnarray*}
\begin{aligned}
P_{H,\overline{\lambda}_{1}}(x)=&-x^{2}-\theta Hx+\left(\dfrac{q+1}{q}\right)\beta+\dfrac{1}{q}\lambda_{1}\\
=&-x^{2}-\theta Hx+\left(\dfrac{q+1}{q}\right)n(H^{2}+c)+\dfrac{1}{q}\lambda_{1}.
\end{aligned}
\end{eqnarray*}
Thus, in terms of $P_{H,\overline{\lambda}_{1}}$, we can rewrite \eqref{3.3}
as
\be \label{3.4}
\frac{1}{q}\int_{M}|\phi|^{2q+4}f^{2} + 
\int_{M}P_{H,\overline{\lambda}_{1}}(|\phi|)f^{2}|\phi|^{2q+2}
\leq C\int_{M}|\phi|^{2q+2}|\nabla f|^{2},
\en
with $C=C(q,\varepsilon)$. Since \eqref{1.7} is satisfied, we can see that
the polynomial $P_{H,\overline{\lambda}_{1}}(|\phi|)$ has two roots 
$r_{0}<0<r_{1}$. If $\sup |\phi|< r_{1}$, then $P_{H,\overline{\lambda}_{1}}> 0$,
and we get the following inequality:
\be\label{3.5}
\int_{M}|\phi|^{2q+4}f^{2}\leq \overline{C}\int_{M}|\phi|^{2q+2}|\nabla f|^{2}, \ 
\forall f\in C_{0}^{\infty}(M).
\en
We can replace $f$ by $f^{q+1}$ in \eqref{3.5} in order to get
\be\no 
\int_{M}|\phi|^{2(q+2)}f^{2(q+1)} &\leq&
\overline{ C}\int_{M}|\phi|^{2(q+1)}|\nabla f^{q+1}|^{2}\\
&=&\no\overline{ C}(q+1)^{2}\int_{M}|\phi|^{\frac{2q(q+2)}{q+1}}
|\phi|^{\frac{2}{q+1}}f^{2q}|\nabla f|^{2}.
\en
Let us denote $C_{1}=\overline{C}(q+1)^{2}$. It follows from
H\"{o}lder inequality that
\be
\no\int_{M}|\phi|^{2(q+2)}f^{2(q+1)} &\leq &C_{1}\left[\int_{M}\left(|\phi|^{\frac{2q(q+2)}{q+1}}f^{2q}\right)^{\frac{q+1}{q}}\right]^{\frac{q}{q+1}}
\left[\int_{M}\left(|\phi|^{\frac{2}{q+1}}|\nabla f|^{2}\right)^{q+1}\right]^{\frac{1}{q+1}}\\
\no&\leq &C_{1}\left[\int_{M}|\phi|^{2(q+2)}f^{2(q+1)}\right]^{\frac{q}{q+1}}
\left[\int_{M}|\phi|^{2}|\nabla f|^{2(q+1)}\right]^{\frac{1}{q+1}},
\en
that is
\be
\no\left(\int_{M}|\phi|^{2(q+2)}f^{2(q+1)}\right)^{\frac{1}{q+1}}&\leq &C_{1}\left(\int_{M}|\phi|^{2}|\nabla f|^{2(q+1)}\right)^{\frac{1}{q+1}}.
\en
Proceeding as in the proof of Theorem \ref{th1}, we have
\[
\int_{B_p(R)}|\phi|^{2(q+2)}\leq \frac{C_{1}}{R^{2q+2}}\int_{B_p(2R)}|\phi|^{2}.
\]
Finally, from \eqref{1.6}, we conclude that $|\phi|\equiv 0$, and the conclusion
of Theorem \ref{1.2} follows.
\end{proof}

\vspace{.1cm}

\begin{proof}[Proof of Corollary \ref{c4}.] 
Since $M^{n}$ is a minimal hypersurface in the Euclidean space
$\mathbb{R}^{n+1}$, we have
\[
\beta=n(H^{2}+c)=0 \quad\text{and}\quad |\phi|^{2}=|A|^{2}.
\]
Thus, inequality \eqref{3.3} implies that
\begin{eqnarray*}
\begin{aligned}
\int_{M}|A|^{2q+4}f^{2} \leq & 
\int_{M}|A|^{2q+2}f^{2}(q|A|^{2}-\overline{\lambda}_{1}) \\
+ & \left(1+\dfrac{2(q+1)+1}{2\varepsilon}\right)\int_{M}|A|^{2q+2}|\nabla f|^{2}.
\end{aligned}
\end{eqnarray*}
From condition \eqref{1.8} and $|A|^{2}\leq\dfrac{\overline{\lambda}_{1}}{q}$,
we obtain
\[
\int_{M}|A|^{2q+4}f^{2}\leq \overline{C}\int_{M}|A|^{2q+2}|\nabla f|^{2}.
\]
The proof now proceeds exactly as in the final part of the proof of Theorem
\ref{th2}.
\end{proof}

\vspace{.1cm}

\begin{proof} [Proof of Theorem \ref{th5}.] 
We will apply the Young inequality to a certain term in the inequality \eqref{2.8}.
In fact, substituting the inequality 
\[
q\int_{M}\theta Hf^{2}|\phi|^{2q+3} \leq q\left(\dfrac{1}{2} \int_{M}f^{2}|\phi|^{2q+4}+\dfrac{1}{2} \int_{M}\theta^{2}H^{2}f^{2}|\phi|^{2q+2} \right)
\]
in \eqref{2.8}, we have
\begin{eqnarray} \label{3.8}
\begin{aligned}
&\left(1+\frac{2-n+n(q+2)}{nq}-\epsilon\right)
\int_{M}|\nabla|\phi|^{q}|^{2}f^{2}|\phi|^{2} \\
&\leq  q\int_{M}f^{2}|\phi|^{2q+4}+\dfrac{q}{2} 
\int_{M}f^{2}|\phi|^{2\alpha+4}+\dfrac{q}{2} 
\int_{M}\theta^{2}H^{2}f^{2}|\phi|^{2\alpha+2} \\
& -q\beta\int_{M}|\phi|^{2q+2}f^{2}+\dfrac{1}{\epsilon}
\int_{M}  |\phi|^{2q+2}|\nabla f|^{2}.
\end{aligned}
\end{eqnarray}
Multiplying the inequality \eqref{3.8} by
\[
\left(1+\dfrac{1}{q}\right)^{2}+\dfrac{\varepsilon(q+1)}{q^{2}},
\]
and multiplying the inequality \eqref{2.10} by
\[
1+\dfrac{2-n+n(q+2)}{nq}-\epsilon,
\]
and joining both, we get
\be
\no&& \left(1+\dfrac{2-n+n(q+2)}{nq}-\varepsilon\right)\int_{M}|\phi|^{2q+4}f^{2}\\
\no&&+\left(1+\dfrac{2-n+n(q+2)}{nq}-\varepsilon\right)(\overline{\lambda}_{1}+\beta)\int_{M}|\phi|^{2q+2}f^{2}\\
 & &\no\leq \left(\left(1+\dfrac{1}{q}\right)^{2}+\dfrac{\varepsilon(q+1)}{q^{2}}\right) q\int_{M}f^{2}|\phi|^{2q+4}\\
 \no&&+\left(\left(1+\dfrac{1}{q}\right)^{2}+\dfrac{\varepsilon(q+1)}{q^{2}}\right)\dfrac{q}{2} \int_{M}f^{2}|\phi|^{2\alpha+4}\\
 & &\no+\dfrac{q}{2}\left(\left(1+\dfrac{1}{q}\right)^{2}+\dfrac{\varepsilon(q+1)}{q^{2}}\right) \int_{M}\theta^{2}H^{2}f^{2}|\phi|^{2\alpha+2} \\
 \no &&-q\beta\left(\left(1+\dfrac{1}{q}\right)^{2}+\dfrac{\varepsilon(q+1)}{q^{2}}\right)\int_{M}|\phi|^{2q+2}f^{2}
 \no+\left(1+\dfrac{2}{\varepsilon}\right)\int_{M}|\phi|^{2q+2}|\nabla f|^{2}.
\en
That is,
\be
\no&& \left[\left(1+\dfrac{2-n+n(q+2)}{nq}-\varepsilon\right)-\left(\left(1+\dfrac{1}{q}\right)^{2}+\dfrac{\varepsilon(q+1)}{q^{2}}\right) \right.\\
\no&&\left.\times\left(q+\dfrac{q}{2}\right)\right]\int_{M}|\phi|^{2q+4}f^{2}
+\left[\left(1+\dfrac{2-n+n(q+2)}{nq}-\varepsilon\right)(\overline{\lambda}_{1}+\beta)\right.\\
&&\no+\left.\left(q\beta-\dfrac{q}{2}\right)\left(\left(1+\dfrac{1}{q}\right)^{2}
+\dfrac{\varepsilon(q+1)}{q^{2}}\right)\right]\int_{M}|\phi|^{2q+2}f^{2}\\
& &\no\leq \left(1+\dfrac{2}{\varepsilon}\right)\int_{M}|\phi|^{2q+2}|\nabla f|^{2}.
\en
Since $n=2$ or $n=3$, condition \eqref{1.9} in $q$ implies that
\[
\left(1+\dfrac{2-n+n(q+2)}{nq}\right)-\dfrac{3}{2}q\left(1+\dfrac{1}{q}\right)^{2}>0.
\]
Therefore, we can choose a constant $\epsilon_{1}>0$ so that, in an
$\epsilon$-neighborhood, we get
\[
\left(1+\frac{2-n+n(q+2)}{nq}-\varepsilon_{1}\right)-
\left(\left(1+\dfrac{1}{q}\right)^{2}+\dfrac{\varepsilon_{1}(q+1)}{q^{2}}\right)
\left(q+\dfrac{q}{2}\right)>0.
\]
On the other hand, from \eqref{1.10}, we have
\[
\overline{\lambda}_{1}>\dfrac{(q+1)^{2}n}{2(n+2nq+2)}(1-2n(H^{2}+c))-n(H^{2}+c).
\]
Thus, again we can choose a constant $\epsilon_{2}>0$ such that
\be
&&\no\left(1+\dfrac{2-n+n(q+2)}{nq}-\varepsilon_{2}\right)(\lambda_{1}+\beta)\\
&&\no+\left(q\beta-\dfrac{q}{2}\right)\left(\left(1+\dfrac{1}{q}\right)^{2}
+\dfrac{\varepsilon_{2}(q+1)}{q^{2}}\right)>0.
\en
Choosing $\epsilon=\min\{\epsilon_{1},\epsilon_{2}\}$, we obtain
\[
\int_{M}|\phi|^{2q+4}f^{2}+\int_{M}|\phi|^{2q+2}f^{2}\leq 
C\int_{M}|\phi|^{2q+2}|\nabla f|^{2},
\]
where $C$ is a positive constant that depends only on $n$, $H$, $q$ and
$\epsilon$. The rest of the proof follows as in the end of previous results.
\end{proof}

\bibliographystyle{amsplain}

\Addresses

\end{document}